\numberwithin{equation}{section}
\numberwithin{figure}{section}
\theoremstyle{plain}
\newtheorem{thm}{\protect\theoremname}[section]
\theoremstyle{plain}
\newtheorem{prop}[thm]{\protect\propositionname}
\theoremstyle{plain}
\newtheorem{cor}[thm]{\protect\corollaryname}
\theoremstyle{plain}
\newtheorem{lem}[thm]{\protect\lemmaname}
\providecommand{\corollaryname}{Corollary}
\providecommand{\lemmaname}{Lemma}
\providecommand{\propositionname}{Proposition}
\providecommand{\theoremname}{Theorem}
\begin{document}
\title{Coefficient growth in square chains}
\author{Shawn Walker}
\begin{abstract}
Suppose $((\cdots((x^{2}-c_{1})^{2}-c_{2})^{2}\cdots)^{2}-c_{k-1})^{2}-c_{k}$
splits into linear factors over $\mathbb{Z}$ and $c_{k}\neq0$. We
show that for each $j$ and each prime $p$, if $p\leq2^{j-1}$ then
$p$ divides $c_{j}$. Consequently,
\[
\ln c_{j}>\frac{1}{4}\cdot2^{j}\,\,\mathrm{for}\,j\geq5
\]
If we also have $p\equiv3\,(\mathrm{mod\,4)}$ then $p^{2^{j-\left\lceil \lg p\right\rceil }}$
divides $c_{j}$. Consequently, if $k\geq3$, there exists some absolute
constant $\lambda>0$ so that,
\[
\ln c_{j}>\lambda k2^{j}\mathrm{\,\,for\,all\,}j
\]
These estimates argue against the possibility of explicitly constructing
polynomials of the given form for large $k$, as the coefficients
quickly become too large to manipulate.
\end{abstract}

\maketitle

\section{\label{sec:factoring}Motivation: factoring integers with square
chains}

Call a polynomial of the form 
\[
P(x)=((\cdots((x^{2}-c_{1})^{2}-c_{2})^{2}\cdots)^{2}-c_{k-1})^{2}-c_{k}
\]
a \textbf{square chain} of length $k$. Some square chains of lengths
$k=3,4$ are presented in \citet[research problem 6.18]{crandall2005}
which have the property that they have $2^{k}$ distinct integer roots.
Crandall and Pomerance then ask about the existence of longer square
chains, suggesting that sufficiently long chains might be useful for
factoring large integers. Indeed, a simple scheme shows promise:

Suppose $n=pq$ is an odd semiprime, and that $p$ and $q$ are approximately
the same size. If $P(x)\in\mathbb{Z}[x]$ has about $\frac{p}{2}\approx\frac{q}{2}$
distinct roots, one can reasonably hope that $P(m)\equiv0\,(\mathrm{mod\ }p)$
for about half of $m\in\{0,1,\ldots,n-1\}$, and similarly hope $P(m)\equiv0\,(\mathrm{mod\ }q)$
for about half of $m\in\{0,1,\ldots,n-1\}$. If we assume heuristically
that these are independent events, then about a quarter of the choices
of $m$ yield $\gcd(P(m),n)=p$, and about a quarter yield $\gcd(P(m),n)=q$.
For a more rigorous analysis, see \citet{DBLP:conf/ants/Lipton94}.

To make this a tractable factoring algorithm, we need a polynomial
$P$ which can be efficiently evaluated and has sufficiently many
distinct roots. Square chains are nearly ideal from the standpoint
of efficient evaluation. As a polynomial of degree $2^{k}$, a square
chain of length $k$ may have up to $2^{k}$ roots and may be evaluated
using only $k$ multiplications. No polynomial of degree $2^{k}$
can be evaluated with fewer multiplications (\citet{DBLP:journals/cjtcs/BorchertMR13}).
An obstacle is finding square chains with many distinct roots.

Let us set aside the question of existence. Suppose that there exists
a square chain of length $k$ which has exactly $2^{k}$ roots, counting
multiplicity, even if those roots are not all distinct. What properties
might such a square chain have?

We say a polynomial $P$ \textbf{crumbles} over the unique factorization
domain $D$ if $P$ may be written as the product of (not necessarily
distinct) linear polynomials in $D[x]$. Unless otherwise indicated
by context, we will assume $D=\mathbb{Z}.$ Clearly $x^{2^{k}}=(\cdots(x^{2}-0)^{2}\cdots-0)^{2}-0$
crumbles over any UFD. Unfortunately, it has only 1 distinct root.
More generally, if we have any crumbling square chain $P$, then $P(x)^{2}-0$
is a crumbling square chain longer than $P$. But extending a chain
in this manner doesn't create any new roots. Conversely, any square
chain whose final coefficient is 0 has exactly the same root set as
a square chain of shorter length, and we can instead consider the
shorter square chain. To this end, call a square chain whose final
coefficient is nonzero a \textbf{fundamental} square chain.

While it may be a big ask, suppose we are able to find some fundamental
crumbling square chain $P$. Then it is guaranteed to have plenty
of distinct roots.
\begin{prop}
\label{prop:ufd}Let $D$ be a unique factorization domain with $1_{D}+1_{D}\neq0_{D}$.
Suppose $P(x)=(\cdots(x^{2}-c_{1})^{2}-\cdots)^{2}-c_{k}\in D[x]$
crumbles over $D$. If $c_{k}\neq0_{D}$, then $P$ has at least $2^{k-1}+1$
distinct roots.
\end{prop}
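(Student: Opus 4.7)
The plan is to track, for each level $j \in \{0, 1, \ldots, k\}$, the \emph{image set}
\[
V_j := \bigl\{ P_j(r) : r \in D,\ P(r) = 0 \bigr\},
\]
where $P_j(x) = (\cdots(x^2 - c_1)^2 \cdots)^2 - c_j$ denotes the partial chain, so that $P_0(x) = x$ and $P_k = P$. Then $|V_0|$ equals the number of distinct roots of $P$, while $V_k = \{0\}$. My goal is to establish the recursion $|V_{j-1}| \ge 2|V_j| - 1$, which rearranges to $|V_{j-1}| - 1 \ge 2(|V_j| - 1)$; once combined with the fact $|V_{k-1}| = 2$, iterating from $j = k-1$ down to $j = 1$ yields $|V_0| - 1 \ge 2^{k-1}$, as desired.

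The relation $P_{j-1}(r)^2 = c_j + P_j(r)$ immediately gives the inclusion $V_{j-1} \subseteq \bigcup_{\tau \in V_j}\{t \in D : t^2 = c_j + \tau\}$. The reverse inclusion is the crux, and I will prove it by exploiting the composition $P(x) = G_{j-1,k}(P_{j-1}(x))$, where $G_{j-1,k}(y)$ is the one-variable square chain built from $c_j, c_{j+1}, \ldots, c_k$. Given $\tau \in V_j$ witnessed by a root $r_0$ (so $P_j(r_0) = \tau$), and $u \in D$ with $u^2 = c_j + \tau$, a quick induction on $m \ge j$ shows $G_{j-1,m}(u) = P_m(r_0)$; in particular $G_{j-1,k}(u) = P(r_0) = 0$. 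Because each iterate $G_{j-1,m}(y)$ depends on $y$ only through $y^2$, the polynomial $G_{j-1,k}$ is even in $y$, so $G_{j-1,k}(-u) = 0$ as well. Hence $(y - u)(y + u) \mid G_{j-1,k}(y)$ in $D[y]$, and composing lifts this to $(P_{j-1}(x) - u)(P_{j-1}(x) + u) \mid P(x)$ in the UFD $D[x]$. Since $P$ crumbles, each factor must crumble, so each has a root $r \in D$; any such $r$ is a root of $P$ with $P_{j-1}(r) = \pm u$, placing both $\pm u$ into $V_{j-1}$.

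The counting is now routine. Distinct $\tau \ne \tau'$ in $V_j$ contribute disjoint square-root sets (otherwise $c_j + \tau = c_j + \tau'$); each $\tau$ contributes two elements unless $c_j + \tau = 0$; and at most one $\tau \in V_j$ can equal $-c_j$. This yields $|V_{j-1}| \ge 2|V_j| - 1$. The hypotheses $c_k \ne 0$ and $1_D + 1_D \ne 0_D$ give $|V_{k-1}| = 2$: applied with $j = k$ and $\tau = 0$, the composition argument places $\pm s \in V_{k-1}$ where $s \in D$ satisfies $s^2 = c_k$, and $\pm s$ are distinct since $s \ne 0$ and $2 \ne 0$. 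The main obstacle I anticipate is precisely the reverse inclusion for $V_{j-1}$: factoring only at the top as $P = (P_{k-1} - s)(P_{k-1} + s)$ propagates crumbling merely one step down, and it is the composition-plus-UFD argument that forces both sign choices to remain available at every intermediate level.
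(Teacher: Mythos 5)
Your proof is correct and is essentially the paper's argument in different packaging: your image sets $V_j$ coincide (up to the shift $T_j=c_j+V_j$) with the paper's tail-square sets $T_j=\{c_j\pm t:t^2\in T_{j+1}\}$, and your counting recursion $|V_{j-1}|\geq 2|V_j|-1$ together with the base case $|V_{k-1}|=2$ (using $c_k\neq 0$ and $1_D+1_D\neq 0_D$) is exactly the paper's estimate $|T_j|\geq 2(|T_{j+1}|-1)+1$ with $|T_{k-1}|=2$. The only organizational difference is how you justify that both square roots of $c_j+\tau$ are attained---a one-shot factorization $(P_{j-1}-u)(P_{j-1}+u)\mid P$ obtained from evenness of the outer chain $G_{j-1,k}$, versus the paper's recursive top-down difference-of-squares splitting---but the essential ingredients ($D[x]$ a UFD, divisors of a crumbling polynomial crumble and hence have roots, and $2\neq 0$ to separate $\pm u$) are used identically.
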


\begin{proof}
Let $T_{k}=\{c_{k}\}$, and call this the $k$-th tail square set
of $P$. Note that $c_{k}$ must be a perfect square; indeed, for
any root $r$ of $P$, we have $c_{k}=(\cdots(r^{2}-c_{1})^{2}\cdots-c_{k-1})^{2}$.
So we may use the identity $a^{2}-b^{2}=(a+b)(a-b)$, to split $P(x)$
into two square chains: 
\[
P(x)=((\cdots(x^{2}-c_{1})^{2}-\cdots)^{2}-c_{k-1}-\sqrt{c_{k}})((\cdots(x^{2}-c_{1})^{2}-\cdots)^{2}-c_{k-1}+\sqrt{c_{k}})
\]
Since $P$ crumbles and $D[x]$ is a unique factorization domain,
each of these factors must crumble. Let $T_{k-1}=\{c_{k-1}\pm t:t^{2}\in T_{k}\}$
be the tail squares of the two factors. Exactly as was the case with
$T_{k}$, elements of $T_{k-1}$ must be perfect squares.

We then repeat this splitting with each factor, splitting $P$ into
4 factors with tail square set $T_{k-2}=\{c_{k-2}\pm t:t^{2}\in T_{k-1}\}$,
then 8 factors with tail square set $T_{k-3}=\{c_{k-3}\pm t:t^{2}\in T_{k-2}\}$,
and so forth onto $2^{k-1}$ factors of the form $(x^{2}-a)$ with
$a\in T_{1}=\{c_{1}\pm t:t^{2}\in T_{2}\}$. If we make the convention
that $c_{0}=0$, we may split the previous set of factors one more
time to get $2^{k}$ factors of the form $(x-r)$ with $r$ in the
set $T_{0}=\{0\pm t:t^{2}\in T_{1}\}$. That is, $T_{0}$ is the set
of roots of $P$.

Suppose $j\in\{0,1,\ldots,k-1\}$. What can we say about the size
of $T_{j}$? Distinct elements of $T_{j+1}$ yield distinct elements
of $T_{j}$: if $\{c_{j}\pm t\}\cap\{c_{j}\pm s\}\neq\emptyset$ then
$t=\pm s$, and so $t^{2}=s^{2}$. Each nonzero element of $T_{j+1}$
yields two distinct elements of $T_{j}$: if $c_{j}+t=c_{j}-t$, then
$2t=0$, and so $t=0$ since $D$ is an integral domain and $2\neq0$.
As such,
\[
|T_{j}|=\left\{ \begin{array}{rl}
2(|T_{j+1}|-1)+1 & \mathrm{if\,}0\in T_{j+1}\\
2|T_{j+1}| & \mathrm{if\,}0\not\in T_{j+1}
\end{array}\right\} \geq2(|T_{j+1}|-1)+1
\]
Consequently, 
\[
|T_{0}|\geq2(|T_{1}|-1)+1\geq2^{2}(|T_{2}|-1)+1\geq\cdots\geq2^{k-1}(|T_{k-1}|-1)+1
\]

To complete the argument, we note that $|T_{k-1}|=2|T_{k}|=2$ since
$0\not\in T_{k}$ by hypothesis.
\end{proof}
As an aside, the bound of proposition~\ref{prop:ufd} is met exactly
when $D=\mathbb{Z}_{q}$ for a prime $q$ satisfying $q\equiv1\,(\mathrm{mod\ }2^{k-1})$,
and when the square chain $P$ of length $k$ is given by
\begin{align*}
P(x) & =(\cdots((x^{2}-0)^{2}\cdots-0)^{2}-2^{-1})^{2}-2^{-2}\\
 & =(x^{2^{k-1}}-2^{-1})^{2}-2^{-2}\\
 & =(x^{2^{k-1}}-1)(x^{2^{k-1}}-0)
\end{align*}
In this case, $0$ is a root of $P$ with multiplicity $2^{k-1}$,
and each $2^{k-1}$-th root of unity $(\mathrm{mod\ }q)$ is a root
of $P$ with multiplicity 1.

\section{Coefficient growth}

Perhaps more interesting than proposition~\ref{prop:ufd} is a specialization
of its contrapositive: for an odd characteristic finite field, fundamental
crumbling square chains must be of strictly bounded length.
\begin{cor}
\label{cor:primechain}Suppose $P(x)=(\cdots(x^{2}-c_{1})^{2}-\cdots)^{2}-c_{k}\in F[x]$
crumbles over the finite field $F$ with $\mathrm{char}(F)\neq2$.
If $|F|\leq2^{j-1}$ then 
\[
c_{j}=c_{j+1}=\cdots=c_{k}=0_{F}
\]
\end{cor}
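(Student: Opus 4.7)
The plan is to derive the corollary by contradiction from Proposition \ref{prop:ufd}, using the size of $F$ as an upper bound on the number of roots that any polynomial in $F[x]$ can have.

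First I would suppose, toward a contradiction, that some $c_i$ with $i \geq j$ is nonzero, and let $m$ be the \emph{largest} such index, so that $c_m \neq 0_F$ but $c_{m+1} = \cdots = c_k = 0_F$. Define the truncated square chain
\[
Q(x) = (\cdots(x^{2}-c_{1})^{2}-\cdots)^{2}-c_{m},
\]
which has length $m \geq j$ and satisfies $c_m \neq 0_F$, so $Q$ is a fundamental square chain. Because $c_{m+1},\dots,c_k$ all vanish, unwinding the definition of $P$ gives $P(x) = Q(x)^{2^{k-m}}$.

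Next I would observe that $Q$ crumbles over $F$. Since $P$ crumbles over $F$ by hypothesis and $F[x]$ is a unique factorization domain, every irreducible factor of $P$ is linear; but every irreducible factor of $Q$ is also an irreducible factor of $P = Q^{2^{k-m}}$, hence linear. So $Q$ is a fundamental crumbling square chain of length $m$ over $F$, and because $\mathrm{char}(F)\neq 2$ we have $1_F+1_F\neq 0_F$, meaning Proposition \ref{prop:ufd} applies to $Q$. It yields at least $2^{m-1}+1 \geq 2^{j-1}+1$ distinct roots of $Q$ in $F$.

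Finally I would derive the contradiction from the size bound: a polynomial over the finite field $F$ has at most $|F|$ distinct roots, so
\[
2^{j-1}+1 \leq |T_0(Q)| \leq |F| \leq 2^{j-1},
\]
which is absurd. Therefore no such index $m$ exists, and $c_j = c_{j+1} = \cdots = c_k = 0_F$. I do not anticipate a serious obstacle here; the only point requiring a moment's care is confirming that $Q$ inherits crumbling from $P = Q^{2^{k-m}}$, which follows immediately from unique factorization in $F[x]$.
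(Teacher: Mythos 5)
Your proof is correct and follows essentially the same route as the paper: truncate at the last nonzero coefficient, note the truncated chain is a fundamental crumbling square chain, and play Proposition~\ref{prop:ufd}'s lower bound of $2^{m-1}+1$ distinct roots against the trivial upper bound $|F|\leq 2^{j-1}$. Your explicit check that $Q$ inherits crumbling from $P=Q^{2^{k-m}}$ via unique factorization in $F[x]$ is a small detail the paper leaves implicit, but the argument is the same.
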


\begin{proof}
Suppose $c_{i}$ is the last non-zero coefficient in $P$, so that
$Q(x)=(\cdots(x^{2}-c_{1})^{2}-\cdots)^{2}-c_{i}$ is a fundamental
crumbling square chain over $F$. By proposition \ref{prop:ufd},
$Q$ must have at least $2^{i-1}+1$ distinct roots in $F$. Of course,
$2^{i-1}+1\leq|F|$, as $Q$ can have, at most, all of $F$ as roots.
So $2^{i-1}+1\leq|F|\leq2^{j-1}.$ Consequently, $i<j$, and so all
coefficients of index $j$ or larger must be zero.
\end{proof}
Our primary interest in corollary~\ref{cor:primechain} will be in
the case $F=\mathbb{Z}_{p}$ for $p$ any prime. But corollary~\ref{cor:primechain}
does not cover the case of $\mathbb{Z}_{2}$. Instead, we derive a
similar, if weaker, result to cover $\mathbb{Z}_{2}$.
\begin{lem}
\label{lem:z2}Suppose $P(x)=(\cdots(x^{2}-c_{1})^{2}-\cdots)^{2}-c_{k}\in\mathbb{Z}[x]$
crumbles over $\mathbb{Z}$. Then $c_{2}\equiv c_{3}\equiv\cdots\equiv c_{k}\equiv0\,(\mathrm{mod\ }2)$
\end{lem}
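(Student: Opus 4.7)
The plan is to chase parities modulo $4$ through the tail square sets $T_1,\ldots,T_k$ constructed in the proof of Proposition~\ref{prop:ufd}, relying on the elementary fact that every perfect square integer is $\equiv 0$ or $1 \pmod 4$, so any two perfect squares whose difference is even must actually differ by a multiple of $4$. The heart of the argument will be an auxiliary claim: every element of $T_j$, for $j \in \{2,\ldots,k\}$, is $\equiv 0 \pmod 4$. To prove the claim, I fix $s \in T_j$, set $t = \sqrt{s} \geq 0$ (an integer, since $T_j$ consists of perfect squares), and use the recursion $T_{j-1} = \{c_{j-1} \pm u : u^2 \in T_j\}$ to place both $c_{j-1} \pm t$ in $T_{j-1}$; since $j - 1 \geq 1$, these are also perfect squares, and the mod-$4$ observation applied to their difference $2t$ forces $t$ to be even, so $s = t^2 \equiv 0 \pmod 4$.

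With the claim established, the lemma follows quickly. The case $j = k$ is immediate, since $c_k$ is the unique element of $T_k$. For $2 \leq j < k$, I pick any $s$ in the (nonempty) set $T_{j+1}$ and set $t = \sqrt{s}$. The claim at index $j+1$ tells me $s \equiv 0 \pmod 4$, so $t$ is even; the claim at index $j$ tells me $c_j + t \in T_j$ is even; combining these, $c_j$ itself is even.

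I expect identifying the right modulus to be the main hurdle: the fact that we must work modulo $4$ rather than modulo $2$ to squeeze any parity information out of the tail squares is what makes the argument go, and it is not an obvious choice at the outset. The only remaining thing to monitor is the possibility that a tail entry or its square root happens to be $0$, but since $0$ is a perfect square and satisfies $0 \equiv 0 \pmod 4$, those boundary cases are absorbed into the mod-$4$ bookkeeping without incident.
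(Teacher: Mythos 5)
Your proof is correct and follows essentially the same route as the paper: both arguments chase the tail square sets $T_j$, use the fact that $c_{j-1}\pm t$ are both perfect squares to force $t$ even, and then transfer that parity to $c_j$ via an element of $T_{j+1}$. The only cosmetic difference is that you work modulo $4$ where the paper works modulo $8$ (and you treat $j=k$ directly rather than via the convention $T_{k+1}=\{0\}$), which if anything streamlines the same idea.
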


\begin{proof}
Consider the tail square sets $T_{j}$ from proposition~\ref{prop:ufd}.
Let us adopt the convention that $T_{k+1}=\{0\}$. This is consistent
with our previous definition, as it makes $T_{k}=\{c_{k}\pm t:t^{2}\in T_{k+1}\}=\{c_{k}\}$.

For an arbitrary $j>1$, choose an arbitrary $t^{2}\in T_{j}$. Then
$c_{j-1}\pm t\in T_{j-1}$ by definition, and so $c_{j-1}\pm t$ are
both squares. Thus,
\[
c_{j-1}\pm t\equiv0,1,\mathrm{\,or}\,4\,(\mathrm{mod\ }8)
\]
The limited set of congruence classes that these elements fall into
lets us somewhat limit what congruence classes their difference falls
into. That is,
\[
(c_{j-1}+t)-(c_{j-1}-t)\equiv2t\equiv0,1,3,4,5,\mathrm{\,or\,}7\,(\mathrm{mod\ }8),
\]
This limits the congruence classes $t$ may fall into as well:
\[
t\equiv0,2,4,\mathrm{\,or\,}6\,(\mathrm{mod\ }8).
\]
So $t$ must be even. Since $t$ was chosen arbitrarily, each element
$t^{2}\in T_{j}$ must be the square of an even number.

For a given $j$ with $1<j<k+1$, choose any $r^{2}\in T_{j+1}$.
By the previous argument, $r$ is even. We have $(c_{j}+r)\in T_{j}$
and so $(c_{j}+r)$ is even. Consequently, $c_{j}$ must be even.
\end{proof}
As a consequence of corollary~\ref{cor:primechain}, coefficients
in long crumbling square chains over $\mathbb{Z}$ must have many
prime factors. This implies a lower bound on the size of those coefficients.
To quantify this, define the primorial of $m$ as:
\[
m\#=\prod_{\begin{array}{c}
p\mathrm{\,prime}\\
p\leq m
\end{array}}p
\]

\begin{prop}
\label{prop:growth}Suppose $P(x)=(\cdots(x^{2}-c_{1})^{2}-\cdots)^{2}-c_{k}\in\mathbb{Z}[x]$
crumbles over $\mathbb{Z}$. Then for each $j$, we have $2^{j-1}\#$
divides $c_{j}$. If $c_{k}\neq0$ and $j\geq5$, then $\ln c_{j}>2^{j-2}$.
\end{prop}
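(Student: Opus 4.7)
The plan is to combine Corollary~\ref{cor:primechain} with Lemma~\ref{lem:z2} to force every prime $p \leq 2^{j-1}$ to divide $c_j$, and then to convert that divisibility statement into a logarithmic lower bound via a standard estimate on Chebyshev's $\theta$-function.

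For the divisibility, I would fix a prime $p \leq 2^{j-1}$ and reduce $P$ modulo $p$.  Since reduction is a ring homomorphism on $\mathbb{Z}[x]$, the image remains a square chain that crumbles over $\mathbb{Z}_p$, with coefficients $\bar c_1,\ldots,\bar c_k$.  If $p$ is odd, $\mathbb{Z}_p$ is a field of odd characteristic with $|F|=p\leq 2^{j-1}$, so Corollary~\ref{cor:primechain} forces $\bar c_j=0$, i.e.\ $p\mid c_j$.  The remaining case $p=2$ requires $j\geq 2$, and Lemma~\ref{lem:z2} dispatches it directly.  Since the primes up to $2^{j-1}$ are pairwise coprime and each divides $c_j$, their product $2^{j-1}\#$ divides $c_j$ as well; the conclusion is vacuous for $j=1$, where $1\#$ is the empty product.

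For the logarithmic bound I first need $c_j\neq 0$, which I would establish by a short descending argument on the tail square sets from the proof of Proposition~\ref{prop:ufd}.  Suppose $c_j=0$ for some $1\leq j\leq k-1$.  Then $T_j=\{\pm t:t^2\in T_{j+1}\}$, and since every element of $T_j$ is a nonnegative integer square, both $t$ and $-t$ can sit in $T_j$ only when $t=0$.  Hence $T_{j+1}=\{0\}$, which forces $c_{j+1}=0$ (add the two equations $c_{j+1}\pm s=0$) and $T_{j+2}=\{0\}$; iterating yields $c_k=0$, contradicting the hypothesis.  Consequently $|c_j|\geq 2^{j-1}\#$, so
\[
\ln|c_j|\geq\theta(2^{j-1}),
\]
where $\theta$ is Chebyshev's first function.

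The remaining step is purely analytic: an explicit lower bound of the shape $\theta(x)>x/2$, verifiable directly on the primes up to some cutoff and following from standard estimates (e.g.\ Rosser--Schoenfeld) beyond it, yields $\theta(2^{j-1})>2^{j-2}$ for every $j\geq 5$.  The main obstacle is really the numerical tightness at the low end: at $j=5$ we have $\theta(16)=\ln 30030\approx 10.31$ against the required $2^3=8$, so whichever Chebyshev estimate one invokes must be sharp enough to bridge the small cases to the asymptotic regime (the same inequality fails at $j=3$, which explains why the statement is restricted to $j\geq 5$).
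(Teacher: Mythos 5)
Your divisibility argument and your analytic endgame are essentially the paper's: odd primes $p\le 2^{j-1}$ are handled by reducing the chain modulo $p$ and invoking Corollary~\ref{cor:primechain}, the prime $2$ by Lemma~\ref{lem:z2}, and the bound $\ln(x\#)>\tfrac12 x$ on the relevant range (checked directly at the small end, Rosser--Schoenfeld beyond a cutoff) is exactly the paper's route, including the observation that $j=5$, i.e.\ $\ln(16\#)=\ln 30030\approx 10.31$ against $2^{3}=8$, is the tight case.

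Where you genuinely diverge is the non-vanishing step, and that is also where a small gap sits. The paper proves more than $c_j\neq 0$: rewriting $P(x)=0$ as $(\cdots)^2=c_i\pm\sqrt{\cdots\pm\sqrt{c_k}}$ gives $c_i\ge\sqrt{c_{i+1}}$, hence $c_j\ge c_k^{1/2^{\,k-j}}>0$. This yields positivity, which is needed both to pass from $2^{j-1}\#\mid c_j$ to $2^{j-1}\#\le c_j$ and for $\ln c_j$ to be defined at all, and the quantitative inequality is reused verbatim in the corollary following Proposition~\ref{prop:growth3m4}. Your descent on the tail square sets ($c_j=0$ forces $T_{j+1}=\{0\}$, hence $c_{j+1}=0$, and so on down to $c_k=0$) is correct, but it delivers only $c_j\neq0$, and accordingly you conclude with a bound on $\ln|c_j|$ rather than the asserted $\ln c_j>2^{j-2}$. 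The fix is one line inside your own machinery: for any $t^2\in T_{j+1}$ (with the convention $T_{k+1}=\{0\}$ when $j=k$), both $c_j+t$ and $c_j-t$ lie in $T_j$ and are perfect squares, so $2c_j=(c_j+t)+(c_j-t)\ge 0$; combined with $c_j\neq0$ this gives $c_j>0$ and the statement as written. With that line added, your argument is a complete, if slightly less reusable, alternative to the paper's.
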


\begin{proof}
$c_{j}$ is even for $j\geq2$ by lemma~\ref{lem:z2}. For each odd
prime $p\leq2^{j-1}$, corollary~\ref{cor:primechain} tells us that
$p$ divides $c_{j}$. Thus $2^{j-1}\#$ divides $c_{j}$.

Suppose $c_{k}>0.$ We claim that $c_{j}>0$ for each $j$. Clearly
$c_{j}\geq0.$ To see that $c_{j}\neq0$, let $i$ be an arbitrary
index $1\leq i<k$. We may write the equation $P(x)=0$ as

\[
(\cdots(x^{2}-c_{1})^{2}-\cdots)^{2}=c_{i}\pm\sqrt{\cdots\pm\sqrt{c_{k}}}
\]

Since the right-hand side of this equation must be non-negative for
every choice of signs, we have $c_{i}\geq\sqrt{c_{i+1}+\cdots}\geq\sqrt{c_{i+1}}$.
As the choice of $i$ was arbitrary, we may apply this inequality
recursively, yielding $c_{j}\geq\sqrt[2^{k-j}]{c_{k}}$. But $c_{k}\neq0$;
it follows that $c_{j}\neq0$ as well.

Together with the fact that $2^{j-1}\#$ divides $c_{j}$, the positivity
of $c_{j}$ implies that $2^{j-1}\#\leq c_{j}$.

\citet[equation 3.14]{rosser1962} establish:
\[
\begin{array}{cc}
x(1-\frac{1}{2\ln x})\leq\ln(x\#) & \mathrm{for\,}x\geq563\end{array}
\]

An exhaustive calculation (omitted) demonstrates the looser bound
\[
\begin{array}{lr}
\frac{1}{2}x<\ln(x\#) & \mathrm{for\,}11\leq x\leq563\end{array}
\]

Thus 
\[
\begin{array}{lr}
\frac{1}{2}2^{j-1}<\ln(2^{j-1}\#)\leq\ln c_{j} & \mathrm{if\,}11\leq2^{j-1}\end{array}
\]
\end{proof}

\section{An asymptotic refinement}

It seems unlikely the lower bounds given in proposition~\ref{prop:growth}
are the best possible. $\ln(x\#)\sim x$, and so it seems likely,
at the very least, that $\ln c_{j}\geq2^{j-1}$ for $j$ sufficiently
large. At the same time, it is plausible that for some primes $p$,
not only must $p$ divide $c_{j}$, but possibly $p^{i}$ divides
$c_{j}$ for some appropriate condition on $p,$ $i$ and $j$. Indeed,
some reflection shows that primes of the form $4n+3$ must be much
more prevalent than we've heretofore indicated.

Let $\nu_{p}(n)=\max\{e\in\mathbb{Z}:p^{e}\big|n\}$ be the exponent
of $p$ in the prime factorization of $n$.

As noted by \citet{DBLP:journals/em/Dilcher00}, at least some of
the coefficients in a crumbling square chain must be expressable as
half the sum of two squares. We show that every coefficient in a crumbling
square chain must be so expressable. By the sum of two squares theorem,
if $p\equiv3\,(\mathrm{mod\,}4)$, then $\nu_{p}(a^{2}+b^{2})$ must
be even. And so $\nu_{p}(c_{j})$ must be even for every coefficient
$c_{j}$ in a crumbling square chain.

What's more, by similar considerations as go into the sum of two squares
theorem, we can propagate powers forward to following coefficients,
so that $\nu_{p}(c_{j+1})\geq2\nu_{p}(c_{j})$.

We collect these ideas into the following lemma.
\begin{lem}
\label{lem:accum3m4}Suppose $P(x)=(\cdots(x^{2}-c_{1})^{2}-\cdots)^{2}-c_{k}\in\mathbb{Z}[x]$
crumbles over $\mathbb{Z}$. If $p$ is prime with $p\equiv3\,(\mathrm{mod\ }4)$,
and $p\leq2^{j-1}$ then $\nu_{p}(c_{j})\geq2^{j-\left\lceil \lg p\right\rceil }$
.
\end{lem}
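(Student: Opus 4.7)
The plan is to combine two ingredients. First, every coefficient $c_{j}$ of a crumbling square chain can be written as half the sum of two integer squares. Second, for $p\equiv 3\pmod{4}$ the sum of two squares theorem forces the $\nu_{p}$-valuations to be even and to propagate forward, doubling with each step. An induction starting from the smallest $j$ where corollary~\ref{cor:primechain} guarantees $p\mid c_{j}$ then yields the stated exponential growth.

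For the representation as a half sum of two squares, I would use the tail set identity $T_{j-1}=\{c_{j-1}\pm t:t^{2}\in T_{j}\}$ from proposition~\ref{prop:ufd}, combined with the fact that every element of $T_{j-1}$ is a square (because the chain crumbles): for any $t^{2}\in T_{j}$ there exist integers $a,b$ with $c_{j-1}+t=a^{2}$ and $c_{j-1}-t=b^{2}$, so $2c_{j-1}=a^{2}+b^{2}$. (The edge case $j=k$ is covered by the convention $T_{k+1}=\{0\}$ from lemma~\ref{lem:z2}, forcing $a=b=\sqrt{c_{k}}$.) Since $p\equiv 3\pmod{4}$, the sum of two squares theorem yields $\nu_{p}(a^{2}+b^{2})=2\min(\nu_{p}(a),\nu_{p}(b))$, so with $m=\min(\nu_{p}(a),\nu_{p}(b))$ we get $\nu_{p}(c_{j-1})=2m$ (in particular, even). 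Writing $a=p^{m}a'$ and $b=p^{m}b'$ then gives $t=\frac{a^{2}-b^{2}}{2}=p^{2m}\cdot\frac{a'^{2}-b'^{2}}{2}$, so $\nu_{p}(t)\geq 2m=\nu_{p}(c_{j-1})$. Consequently, for every $t^{2}\in T_{j+1}$ one has $\nu_{p}(t)\geq\nu_{p}(c_{j})$.

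The propagation bound $\nu_{p}(c_{j+1})\geq 2\nu_{p}(c_{j})$ now falls out: pick any $s^{2}\in T_{j+2}$ and set $u_{1}^{2}=c_{j+1}+s$, $u_{2}^{2}=c_{j+1}-s$, both in $T_{j+1}$. The previous step gives $\nu_{p}(u_{i})\geq\nu_{p}(c_{j})$, and the identity $2c_{j+1}=u_{1}^{2}+u_{2}^{2}$ combined with the sum of two squares theorem yields $\nu_{p}(c_{j+1})=2\min_{i}\nu_{p}(u_{i})\geq 2\nu_{p}(c_{j})$. The lemma then follows by induction: at the smallest $j$ with $p\leq 2^{j-1}$, namely $j_{0}=\lceil\lg p\rceil+1$, corollary~\ref{cor:primechain} gives $\nu_{p}(c_{j_{0}})\geq 1$, upgraded to $\geq 2=2^{j_{0}-\lceil\lg p\rceil}$ by evenness, and repeated doubling yields $\nu_{p}(c_{j})\geq 2^{j-\lceil\lg p\rceil}$ for all $j\geq j_{0}$. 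I expect the most delicate part to be the verification of $\nu_{p}(t)\geq\nu_{p}(c_{j})$: the $p$-adic analysis of $a^{2}-b^{2}$ splits according to whether $\nu_{p}(a)=\nu_{p}(b)$ or not, with the two subcases requiring slightly different reasoning (the lower valuation dominates when they differ, while an extra factor of $p^{m}$ can be pulled out when they agree). Once that is handled, the remainder of the argument is essentially a telescoping doubling.
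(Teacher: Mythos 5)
Your proposal is correct and follows essentially the same route as the paper: tail-square sets, the identity $2c_{j-1}=a^{2}+b^{2}$, the even splitting of the $p$-adic valuation forced by $p\equiv3\,(\mathrm{mod}\ 4)$, propagation to $t$ via $2t=a^{2}-b^{2}$, and an induction seeded by corollary~\ref{cor:primechain}. The only differences are cosmetic: you package the splitting as the elementary identity $\nu_{p}(a^{2}+b^{2})=2\min(\nu_{p}(a),\nu_{p}(b))$ (which the paper obtains by factoring in the Gaussian integers, where $p$ is inert) and phrase the induction as the doubling recurrence $\nu_{p}(c_{j+1})\geq2\nu_{p}(c_{j})$ rather than carrying the explicit exponent $2^{j-\left\lceil \lg p\right\rceil }$ through each step.
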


\begin{proof}
Suppose $\left\lceil \lg p\right\rceil =h-1$. Then $p$ divides $c_{h}$
by corollary~\ref{cor:primechain}.

Choose an arbitrary $t^{2}\in T_{h+1}$. By definition, $c_{h}\pm t\in T_{h}$.
All members of $T_{h}$ are squares, so there exist $r,s$ so that
$c_{h}+t=r^{2}$ and $c_{h}-t=s^{2}$. Then $2c_{h}=r^{2}+s^{2}$,
that is $2c_{h}$ is the sum of two squares.

By the sum of two squares theorem, since $p\equiv3\,(\mathrm{mod\,}4)$
and $p$ divides $r^{2}+s^{2}$, then $p^{2}$ divides both $r^{2}$
and $s^{2}$. It follows that $\nu_{p}(c_{h})\geq2^{1}=2^{h-\left\lceil \lg p\right\rceil }$.

To handle the more general case, we proceed inductively. Suppose $\nu_{p}(c_{j-1})\geq2^{j-1-\left\lceil \lg p\right\rceil }$.
We will show that $\nu_{p}(c_{j})\geq2^{j-\left\lceil \lg p\right\rceil }$.

As before, choose an arbitrary $t^{2}\in T_{j}$, making $c_{j-1}\pm t\in T_{j-1}$.
Write $c_{j-1}+t=r^{2}$ and $c_{j-1}-t=s^{2}$. Then $2c_{j-1}=r^{2}+s^{2}$.
Since $p\neq2$,
\begin{align*}
\nu_{p}(r^{2}+s^{2}) & =\nu_{p}(2c_{j-1})\\
 & =\nu_{p}(c_{j-1})\\
 & \geq2^{j-1-\left\lceil \lg p\right\rceil }
\end{align*}

Also, we may write $r^{2}+s^{2}=(r+is)(r-is)$ as the product of two
Gaussian integers. We recall two well known results: the Gaussian
integers form a unique factorization domain, and $p$ is a prime Gaussian
integer since $p\equiv3\,(\mathrm{mod\ }4)$. As such, it makes sense
to extend our definition of $\nu_{p}$ to Gaussian integers. Now,
$\nu_{p}(r+is)=\nu_{p}(r-is)$ since $p$ is its own complex conjugate.
Thus 
\begin{align*}
2\nu_{p}(r+is) & =\nu_{p}(r+is)+\nu_{p}(r-is)\\
 & =\nu_{p}((r+is)(r-is))\\
 & =\nu_{p}(r^{2}+s^{2})\\
 & \geq2^{j-1-\left\lceil \lg p\right\rceil }
\end{align*}
So 
\[
\nu_{p}(r+is)\geq\frac{1}{2}2^{j-1-\left\lceil \lg p\right\rceil }
\]
Any power of $p$ that divides $r+is$ must divide both $r$ and $s$,
so
\[
\nu_{p}(r),\,\nu_{p}(s)\geq\frac{1}{2}2^{j-1-\left\lceil \lg p\right\rceil }
\]
implying that
\[
\nu_{p}(r^{2}),\,\nu_{p}(s^{2})\geq2^{j-1-\left\lceil \lg p\right\rceil }
\]

Since $r^{2}$ and $s^{2}$ share a common power of $p$, their difference
$r^{2}-s^{2}=2t$ must share the same common power. That is, 
\begin{align*}
\nu_{p}(2t) & =\nu_{p}(r^{2}-s^{2})\\
 & \geq2^{j-1-\left\lceil \lg p\right\rceil }
\end{align*}
Since $p\neq2$, this implies that 
\[
\nu_{p}(t)\geq2^{j-1-\left\lceil \lg p\right\rceil }
\]
 and so 
\[
\nu_{p}(t^{2})\geq2^{j-\left\lceil \lg p\right\rceil }
\]
As $t^{2}$ was chosen arbitrarily, this inequality holds for any
$t^{2}\in T_{j}$.

Choose any $u^{2}\in T_{j+1}$. We have $c_{j}\pm u\in T_{j}$ by
definition. So 
\[
\nu_{p}(c_{j}+u),\,\nu_{p}(c_{j}-u)\geq2^{j-\left\lceil \lg p\right\rceil }
\]
Since $c_{j}+u$ and $c_{j}-u$ share a common power of $p$, their
sum, $(c_{j}+u)+(c_{j}-u)=2c_{j}$ must share the same common power.
That is,
\begin{align*}
\nu_{p}(2c_{j}) & =\nu_{p}((c_{j}+u)+(c_{j}-u)).\\
 & \geq2^{j-\left\lceil \lg p\right\rceil }
\end{align*}
And since $p\neq2$, 
\[
\nu_{p}(c_{j})\geq2^{j-\left\lceil \lg p\right\rceil }
\]
\end{proof}
By proposition~\ref{prop:growth}, later coefficients ``pick up''
many primes as divisors. By lemma~\ref{lem:accum3m4}, once a coefficient
acquires a divisor $p\equiv3\,(\mathrm{mod\ }4)$, each later coefficient
is divisible many times by the same prime. Define 
\[
x\#_{3:4}=\prod_{\begin{array}{c}
p\,\mathrm{prime}\\
p\equiv3\,(\mathrm{mod\ }4)\\
p\leq x
\end{array}}p
\]

Together, proposition~\ref{prop:growth} and lemma~\ref{lem:accum3m4}
imply:
\begin{prop}
\label{prop:growth3m4}Suppose $P(x)=(\cdots(x^{2}-c_{1})^{2}-\cdots)^{2}-c_{k}\in\mathbb{Z}[x]$
crumbles over $\mathbb{Z}$. Let 
\[
D_{j}=2^{j-1}\#\cdot\prod_{i=0}^{j-1}(2^{j-1-i}\#_{3:4})^{2^{i}}
\]
 For each $j$, $D_{j}$ divides $c_{j}$. If $c_{k}\neq0$, then
for some absolute constant $\lambda>0$ and each $j\geq3$,
\[
\ln c_{j}>\lambda j2^{j}
\]
\end{prop}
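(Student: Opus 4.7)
The plan is to prove $D_j \mid c_j$ prime-by-prime and then convert this divisibility into the asymptotic bound by estimating $\ln D_j$ from below. For the divisibility, I would reparametrize the inner product as $\prod_{m=0}^{j-1}(2^m\#_{3:4})^{2^{j-1-m}}$: a prime $p\equiv 3\pmod{4}$ with $p\le 2^{j-1}$ appears in the factor indexed by $m$ precisely when $\lceil\lg p\rceil\le m$, so its total exponent in the product equals
\[
\sum_{m=\lceil \lg p\rceil}^{j-1}2^{j-1-m}\;=\;2^{j-\lceil \lg p\rceil}-1,
\]
and adding the one power contributed by the $2^{j-1}\#$ factor recovers exactly the $2^{j-\lceil\lg p\rceil}$ guaranteed by lemma~\ref{lem:accum3m4}. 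All remaining primes in $D_j$ come from the single copy of $2^{j-1}\#$, each matched by corollary~\ref{cor:primechain} (odd $p$) or lemma~\ref{lem:z2} ($p=2$). Hence $D_j\mid c_j$.

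Next, I would translate divisibility into a size bound. Because $c_k$ is a perfect square, $c_k\ne 0$ forces $c_k>0$, and the cascade argument from proposition~\ref{prop:growth} propagates positivity to every $c_j$, so $c_j\ge D_j$. Writing
\[
\ln D_j \;=\; \ln(2^{j-1}\#) \;+\; \sum_{i=0}^{j-1} 2^{i}\,\ln\!\bigl(2^{j-1-i}\#_{3:4}\bigr),
\]
the dominant growth comes from the sum. Given a Chebyshev-style estimate $\ln(x\#_{3:4})\ge cx$ valid for $x\ge x_0$ with some fixed $c>0$, each of the roughly $j-\lg x_0$ terms with $2^{j-1-i}\ge x_0$ contributes at least $c\cdot 2^{j-1}$, making the sum at least a constant times $cj\cdot 2^{j-1}$. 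This yields $\ln c_j > \lambda j 2^j$ for an absolute $\lambda>0$, with the finitely many small values of $j\ge 3$ absorbed by shrinking $\lambda$ or handled by direct computation, in the style of the closing step of proposition~\ref{prop:growth}.

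The main obstacle is supplying the effective lower bound $\ln(x\#_{3:4})\ge cx$. The qualitative fact $\theta(x;4,3)\sim x/2$ is the prime number theorem in the residue class $3\pmod 4$, and since the proposition only asserts existence of some absolute $\lambda$, any explicit pair $(c,x_0)$ will do. Such a bound is available either from an effective Rosser--Schoenfeld-type estimate for primes in arithmetic progressions or, more elementarily, from a Chebyshev-style argument restricted to primes $\equiv 3\pmod 4$ together with a finite verification for $x<x_0$.
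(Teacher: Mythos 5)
Your proposal is correct and is essentially the paper's own argument: the same prime-by-prime exponent count in $D_{j}$ (exponent $2^{j-\left\lceil \lg p\right\rceil }$ for $p\equiv3\,(\mathrm{mod\ }4)$, exponent $1$ otherwise, matched against corollary~\ref{cor:primechain}, lemma~\ref{lem:z2}, and lemma~\ref{lem:accum3m4}), positivity of $c_{j}$ carried over from proposition~\ref{prop:growth} to get $c_{j}\geq D_{j}$, and a prime-number-theorem-in-progressions lower bound for $\ln(x\#_{3:4})$ giving $\ln D_{j}>\lambda j2^{j}$. The only difference is bookkeeping: you introduce a threshold $x_{0}$ and absorb finitely many small $j$ by shrinking $\lambda$, where the paper takes a single constant valid for all $x\geq3$ from its Siegel--Walfisz citation.
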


\begin{proof}
For primes $p\leq2^{j-1}$:

If $p\not\equiv3\,(\mathrm{mod\ }4)$ then $\nu_{p}(D_{j})=1$.

If $p\equiv3\,(\mathrm{mod\ }4)$ then 
\[
\nu_{p}(D_{j})=1+\sum_{i=0}^{j-1-\left\lceil \lg p\right\rceil }2^{i}=2^{j-\left\lceil \lg p\right\rceil }
\]

Thus by corollary~\ref{cor:primechain} and lemma~\ref{lem:accum3m4},
$D_{j}$ divides $c_{j}$.

Corollaries of the Siegel-Walfisz theorem give $\ln(x\#_{3:4})\sim\frac{1}{2}x$,
c.f. \citet[corollaries 11.15, 11.20]{montgomery_vaughan_2006}. So
there must be some constant $\lambda_{3:4}>0$ satisfying $\lambda_{3:4}x<\ln(x\#_{3:4})$
for all $x\geq3$. Similarly, by the prime number theorem, $\ln(x\#)\sim x$,
and thus there is a constant $\lambda_{1}$ so that $\lambda_{1}x<\ln(x\#)$
for all $x\geq2$. Let $4\lambda=\min(\lambda_{1},\lambda_{3:4})$.
Then for $j\geq3$

\[
\begin{array}{rcl}
\ln D_{j} & = & \ln(2^{j-1}\#)+{\displaystyle {\sum_{i=0}^{j-1}}}2^{i}\cdot\ln(2^{j-1-i}\#_{3:4})\\
 & = & \ln(2^{j-1}\#)+{\displaystyle {\sum_{i=0}^{j-3}}2^{i}\cdot\ln(2^{j-1-i}\#_{3:4})}\\
 & > & 4\lambda2^{j-1}+{\displaystyle 4\lambda{\sum_{i=0}^{j-3}}2^{i}\cdot2^{j-1-i}}\\
 & = & 4\lambda2^{j-1}(1+j-2)\\
 & = & 2\lambda(j-2)2^{j}\\
 & \geq & \lambda j2^{j}
\end{array}
\]

If $c_{k}\neq0$, then $c_{j}>0$ as argued in proposition~\ref{prop:growth}.
Since $D_{j}$ divides $c_{j}$, positivity of $c_{j}$ forces $D_{j}\leq c_{j}$.
And so $\lambda j2^{j}<\ln D_{j}\leq\ln c_{j}$.
\end{proof}
We can sharpen the closed form estimate slightly:
\begin{cor}
Suppose $P(x)=(\cdots(x^{2}-c_{1})^{2}-\cdots)^{2}-c_{k}\in\mathbb{Z}[x]$
crumbles over $\mathbb{Z}$, $k\geq3$, and $c_{k}>0$. Then there
exists an absolute constant $\lambda>0$ so that for all $j$,
\[
\ln c_{j}>\lambda k2^{j}
\]
\end{cor}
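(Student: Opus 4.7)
The plan is to bootstrap from proposition~\ref{prop:growth3m4}, which already gives the stronger exponent $\lambda k 2^k$ at the \emph{last} index $j=k$, and then propagate that single estimate downward to every smaller index using the elementary comparison between consecutive coefficients that was established inside the proof of proposition~\ref{prop:growth}.

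First I would recall the auxiliary inequality $c_i \geq \sqrt{c_{i+1}}$ that fell out of evaluating $P(x)=0$ as an iterated nested radical: writing $(\cdots(x^{2}-c_{1})^{2}\cdots)^{2} = c_{i}\pm\sqrt{\cdots\pm\sqrt{c_{k}}}$ and using that the right-hand side must be non-negative for some choice of signs. Iterating this for $j\leq i \leq k-1$ gives
\[
c_{j}\geq c_{k}^{1/2^{k-j}},
\]
so, taking logarithms,
\[
\ln c_{j}\geq 2^{j-k}\ln c_{k}.
\]

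Next I would apply proposition~\ref{prop:growth3m4} at index $j=k$. The hypothesis $k\geq 3$ is exactly what is needed to trigger that proposition's estimate, yielding an absolute constant $\lambda>0$ with $\ln c_{k}>\lambda k 2^{k}$. Combining this with the descent inequality above gives
\[
\ln c_{j}\geq 2^{j-k}\ln c_{k} > 2^{j-k}\cdot\lambda k 2^{k} = \lambda k 2^{j}
\]
for every $j\leq k$, which is the claim (the statement is vacuous for $j>k$ since there are no coefficients beyond $c_k$).

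There is no real obstacle here; the corollary is a short packaging step. The only thing to be a little careful about is that the $\lambda$ produced in proposition~\ref{prop:growth3m4} is already an absolute constant and is reused unchanged, so no loss in the constant occurs during the descent, and the bound holds uniformly in $j$. The positivity $c_k>0$ (rather than just $c_k\neq 0$) is exactly what lets us use the radical-based inequality without sign issues, matching the setup of proposition~\ref{prop:growth}.
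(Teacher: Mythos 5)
Your proposal is correct and takes essentially the same route as the paper: both apply proposition~\ref{prop:growth3m4} at the last index to get $\ln c_{k}>\lambda k2^{k}$ and then descend via the inequality $c_{j}\geq\sqrt[2^{k-j}]{c_{k}}$ established in the proof of proposition~\ref{prop:growth}. No gaps worth noting.
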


\begin{proof}
We observed in the proof of proposition~\ref{prop:growth} that $c_{j}\geq\sqrt[2^{k-j}]{c_{k}}$
so,
\begin{align*}
\ln c_{j} & \geq\frac{\ln c_{k}}{2^{k-j}}\\
 & >\frac{\lambda k2^{k}}{2^{k-j}}\\
 & =\lambda k2^{j}
\end{align*}
\end{proof}
It is interesting to note that this statement is enough to show the
length of the chain influences the size of $c_{1}$, as it shows that
$\ln c_{1}\geq2\lambda k$.

\section{Discussion and related work}

To factor a product of two 500-bit primes using the algorithm described
in section~\ref{sec:factoring}, we would need to start by constructing
a fundamental crumbling square chain of length not much smaller than
500. According to proposition~\ref{prop:growth}, the coefficient
$c_{400}$ of such a chain would be at least $\lg e^{2^{398}}\approx9.3\times{10}^{119}$
bits in length. By way of comparison, estimates place the total digital
storage capacity of the world at approximately ${10}^{22}$ bits as
of the year 2019. Even if we knew how to construct such a chain, precalculating
the coefficients of such a chain would clearly be infeasible.

Finding fundamental crumbling square chains has proven difficult.
While \citet{DBLP:journals/em/Dilcher00} provides a characterization
of length 3 fundamental crumbling square chains, and \citet{Bremner08}
describes two infinite families of length 4 fundamental crumbling
square chains, no fundamental crumbling square chains of length 5
are known. Indeed, \citet{DBLP:journals/cjtcs/BorchertMR13} points
out that a crumbling square chain of length 5 with distinct roots
would advance understanding of a historied question known as the Prouhet-Tarry-Escott
problem.

\citet{DBLP:journals/cjtcs/BorchertMR13} discuss a more general family
of polynomials, which they term gems. By construction, their gems
are polynomials which are efficiently computable, crumble over $\mathbb{Z}$,
and have distinct roots. While the highest known degree of a square
chain that crumbles over $\mathbb{Z}$ is 16, the authors of that
article describe gems of degrees up to 55.

\bibliographystyle{plainnat}
\bibliography{squarechain}

\end{document}